\newtheorem{thmintr}{Theorem}
\newtheorem{prop}{Proposition}[section]
\theoremstyle{definition}
\newtheorem{rem}[prop]{Remark}
\newtheorem*{ack}{Acknowledgement}
\def\co{\colon\thinspace}
\newcommand{\C}{\mathbb C}
\newcommand{\rmd}{\mathrm d}
\newcommand{\F}{\mathbb F}
\newcommand{\rmi}{\mathrm i}
\newcommand{\MM}{\mathcal M}
\newcommand{\N}{\mathbb N}
\newcommand{\bfp}{\mathbf p}
\newcommand{\bfq}{\mathbf q}
\newcommand{\R}{\mathbb R}
\newcommand{\bfu}{\mathbf u}
\newcommand{\bfv}{\mathbf v}
\newcommand{\bfw}{\mathbf w}
\newcommand{\bfx}{\mathbf x}
\newcommand{\bfy}{\mathbf y}
\newcommand{\Z}{\mathbb Z}
\newcommand{\bfz}{\mathbf z}
\newcommand{\lra}{\longrightarrow}
\newcommand{\ra}{\rightarrow}
\DeclareMathOperator{\cp}{\mathrm{Cap}}
\DeclareMathOperator{\ev}{\mathrm{ev}}
\DeclareMathOperator{\fs}{\mathrm{FS}}
\DeclareMathOperator{\id}{\mathrm{id}}
\DeclareMathOperator{\im}{\mathrm{Im}}
\DeclareMathOperator{\Int}{\mathrm{Int}}
\DeclareMathOperator{\pt}{\mathrm{pt}}
\DeclareMathOperator{\st}{\mathrm{st}}
\begin{document}

\author{Johanna Bimmermann}
\author{Bernd Stratmann}
\author{Kai Zehmisch}
\address{Fakult\"at f\"ur Mathematik, Ruhr-Universit\"at Bochum,
Universit\"atsstra{\ss}e 150, D-44801 Bochum, Germany}
\email{Johanna.Bimmermann@rub.de}
\email{Bernd.X.Stratmann@rub.de}
\email{Kai.Zehmisch@rub.de}

\title[Fitting without fittings]{Fitting without fittings}

\date{09.06.2025}

\begin{abstract}
  We show that all symplectically aspherical fillings
  of the unit cotangent bundle of a given odd-dimensional sphere
  are diffeomorphic to the corresponding unit co-disc bundle.
  The concept of fittings previously introduced is not needed.
\end{abstract}

\subjclass[2010]{57R17; 32Q65, 53D35, 57R80}
\thanks{This research is part of a project in the SFB/TRR 191
{\it Symplectic Structures in Geometry, Algebra and Dynamics}, 
funded by the DFG}

\maketitle


\section{Introduction\label{sec:intro}}


A closed co-oriented contact manifold $(M,\xi)$
often arises as the convex contact type boundary of a compact
symplectic manifold $(W,\omega)$,
which is called a (strong) {\bf symplectic filling} of $(M,\xi)$.
This raises the question,
whether the symplectic filling $(W,\omega)$ of $(M,\xi)$
is unique.
As the blow up of a symplectic filling $(W,\omega)$ of $(M,\xi)$
is a symplectic filling of $(M,\xi)$,
we require the symplectic filling to be {\bf symplectically aspherical},
i.e.\ $W$ admits no spherical homology $2$-class
on which the symplectic form $\omega$ evaluates non-trivially.

Regarding the uniqueness question,
substantial knowledge has already been established.
For the $4$-dimensional case,
where requiring minimality
instead of symplectic asphericity is sufficient and
diffeomorphism classification addressing the symplectic structure is possible,
we refer to the introductions in \cite{bgz19,gkz23,gz26}.
In higher dimensions,
there are examples where uniqueness up to diffeomorphism holds:
Eliashberg--Floer--McDuff proved
that a symplectically aspherical filling
of the standard contact sphere is diffeomorphic to a ball,
see \cite[Theorem 1.5]{mcd91}.
For generalisations to subcritically Stein fillable contact manifolds,
or even Liouville split ones, we refer to the work of
Barth--Geiges--Zehmisch \cite{bgz19} and Zhou \cite{zhou23},
resp.
The first examples in the critical regime,
taking the total space
of the unit cotangent bundle over $T^n$,
were independently found by Bowden--Gironella--Moreno \cite{bgm22}
and Geiges--Kwon--Zehmisch \cite{gkz23}.
In \cite{gkz23},
instead of a single $n$-torus $T^n$,
more generally a product of $T^2$
with a product of unitary groups and spheres
is allowed.
If the base manifold is an odd-dimensional sphere,
partial uniqueness results were obtained by
Kwon--Zehmisch \cite{kz19} and
Kwon--Oba \cite{ko24}.
The aim of this note
is to remove all restrictions.
Our approach builds up on intermediate results from \cite{kz19}
and is inspired
by \cite{ko24}.

Let $S^{2d+1}$, $d\geq1$, be the unit sphere
$\partial D^{2d+2}\subset\C^{d+1}$
provided with the round metric.
Equip the total space of the cotangent bundle $T^*S^{2d+1}$
with the Liouville $1$-form $\lambda$ 
given by $\bfp\,\rmd\bfq$ in local coordinates.
Consider the induced contact structure $\xi$
on the unit cotangent bundle $M=ST^*S^{2d+1}$
together with the unit disc bundle
\[
(W_{\!\st},\omega_{\st})=
\big(DT^*S^{2d+1},\rmd\lambda\big)
\,,
\]
the so--called {\bf standard filling} of the contact manifold $(M,\xi)$.

\begin{thmintr}
\label{thm:uniquenesstheorem}
The underlying smooth structure $W$
of a symplectically aspherical filling $(W,\omega)$ of $(M,\xi)$
is diffeomorphic to $W_{\!\st}$.
\end{thmintr}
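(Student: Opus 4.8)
The plan is to run the Eliashberg--Floer--McDuff filling obstruction in the form adapted to cotangent bundles of spheres: cap the given filling inside the projective quadric, fill the resulting closed symplectic manifold with $J$-holomorphic lines, and read off the diffeomorphism type of $W$ from the moduli space of those lines. Concretely, recall that $T^*S^{2d+1}$ with its canonical symplectic structure is symplectomorphic to (the completion of) the affine quadric $\{z_0^2+\dots+z_{2d+1}^2=1\}\subset\C^{2d+2}$, whose projective closure is the smooth quadric $Q\subset\C P^{2d+2}$, with hyperplane section at infinity a symplectic divisor $Z$ of positive normal degree. Scaling so that $W_{\!\st}=DT^*S^{2d+1}$ becomes a sublevel set produces a splitting $Q=W_{\!\st}\cup_M C$ in which $C:=Q\setminus\Int(W_{\!\st})$ is a concave filling of $(M,\xi)$, a neighbourhood of $Z$. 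Given an arbitrary symplectically aspherical filling $(W,\omega)$ of $(M,\xi)$, I would normalise the contact collar of $M$ by Gray stability and rescaling and glue to form a closed symplectic manifold $X:=W\cup_M C$. The intermediate results of \cite{kz19} supply that $W$ is simply connected with the homology of $W_{\!\st}$, so Mayer--Vietoris gives $H_2(X;\Z)\cong\Z$, generated by the class $L$ of a line of $Q$, which is represented inside $C$. Hence $\omega(L)$ is a fixed constant $\hbar>0$ depending only on the cap, and symplectic asphericity forces every non-constant $J$-holomorphic sphere in $X$ either to meet $C$ or to carry a positive multiple of $L$; in particular every $J$-holomorphic sphere contained in $W$ is constant.

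Next I would choose an $\omega$-compatible $J$ on $X$ equal on $C$ to the integrable complex structure of $Q$ -- so that $Z$ is $J$-complex and the model lines are $J$-holomorphic throughout $C$ -- and generic in $\Int(W)$, and study the moduli space $\mathcal{M}_p$ of $J$-holomorphic spheres of class $L$ through a fixed point $p\in\Int(W)$. Its expected dimension $2c_1(L)-4=4d-2$ matches that of the space of lines through a point of $Q$, which is a quadric of complex dimension $2d-1$. Gromov compactness together with the area identity $\omega(L)=\hbar$, minimality of $\hbar$ among $\omega$-areas of $J$-holomorphic sphere classes, and asphericity (making any sphere in $W$ constant) leaves only constant ghost degenerations in $\overline{\mathcal{M}}_p$; combining automatic transversality and positivity at the unique, transverse intersection of each such curve with $Z$ near the integrable region with genericity in the interior should make $\mathcal{M}_p$ a closed manifold of the expected dimension, to be identified with the quadric of lines through a point of $Q$. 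For $d=1$ one argues directly with $Q=\mathcal{Q}^3$ and $Z=\C P^1\times\C P^1$, controlling the two ruling classes in $C$ through the choice of cap.

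To conclude, over the full moduli space of $J$-lines one forms the incidence space $\{(u,z):z\in\im u\}$; by the previous step, together with the parallel description of $J$-lines through a prescribed point of $Z$, its evaluation map to $X$ should be, away from the ghost locus, a fibre bundle with fibre the $(2d-1)$-dimensional quadric, realising $X$ as the corresponding quotient of the incidence space, exactly as $Q$ is recovered from the incidence variety of its lines. Arranging this identification to be the identity near $Z$, where $J$ is already integrable, yields a diffeomorphism $X\to Q$ carrying $C$ to $C$, and therefore a diffeomorphism $W=X\setminus\Int(C)\to Q\setminus\Int(C)=W_{\!\st}$.

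All the substance is in the last two steps. Because $J$ is pinned near the cap while every line of class $L$ must traverse the interior, transversality of $\mathcal{M}_p$ is not free: one has to marry automatic transversality and positivity at the ends at infinity with a homotopy argument over $W$, and rule out multiply covered and interior bubbles using primitivity of $L$ and asphericity. Even granting that $\mathcal{M}_p$ is a closed manifold sweeping out $X$, promoting the line family to an honest diffeomorphism $X\cong Q$ -- controlling the incidence fibration globally, over $W$ as well as over the cap -- is precisely the step at which earlier treatments introduced the auxiliary construction of \emph{fittings}; the aim here, following \cite{ko24}, is to carry out this reconstruction directly from the intermediate results of \cite{kz19} and do without fittings.
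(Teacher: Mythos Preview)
Your outline contains a genuine circularity. You invoke ``the intermediate results of \cite{kz19}'' to assert that $W$ is simply connected with the homology of $W_{\!\st}$, and then use $H_2(X;\Z)\cong\Z$ to control bubbling. But \cite{kz19} does \emph{not} establish the homology of $W$ unconditionally; that is precisely the conclusion to be reached, and \cite{kz19} obtained it only under the additional hypothesis of a fitting. The intermediate results actually available from \cite{kz19} are the $h$-cobordism characterisation, the cap construction in $\C P^1\times\C^d\times\C P^d$, and the reduction of the theorem to properness of an evaluation map on a moduli space of holomorphic spheres in the class $C_1=[\C P^1\times\pt\times\pt]$. Your bubbling analysis therefore lacks its main input: without already knowing $H_2(W)$, a Mayer--Vietoris decomposition of bubble classes involves $H_2$ of the cap, and for your quadric cap at $d=1$ this is $\Z^2$ (the hyperplane section is $\C P^1\times\C P^1$), so minimality of $\hbar$ does not by itself rule out two-component degenerations.

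Your overall architecture is also quite different from the paper's, and the divergence is instructive. You propose to cap with the projective quadric, foliate by lines, and then \emph{reconstruct} $X$ from the incidence variety to produce a diffeomorphism $X\cong Q$ directly. The paper does not attempt any such reconstruction. It keeps the cap $\C P^1\times\C^d\times\C P^d$ of \cite{kz19}, whose $H_2$ is generated by $C_1$ and a line class $C_2\subset\C P^d$, so that bubble classes take the form $A_1=[C_1]+\ell_1[C_2]$ and $A_j=\ell_j[C_2]$ with $\ell_j\geq1$ for $j\geq2$. The new idea is a second complex hypersurface $\hat{H}\subset\widehat{\cp}$, defined by $z_0w_0+z_0'(z_1w_1+\cdots+z_dw_d)=0$, with $C_i\cdot\hat{H}=1$ for $i=1,2$; intersecting the nodal limit with $\hat{H}$ forces $N\leq2$, and the residual case $N=2$, $\ell_1=-1$ is then excluded by an exhausting strictly plurisubharmonic function $\hat{h}$ on the complement of $\hat{H}$, obtained via a Pl\"ucker--Segre--Veronese type affine embedding, which forces $u_1$ to be constant. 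Properness of $\widehat{\ev}$ follows, and the diffeomorphism $W\cong W_{\!\st}$ is read off from the reductions of \cite{kz19}. In particular, the step you flag as the crux---turning the line family into a global diffeomorphism---is simply not taken; the argument stays at the level of properness and homology, and the $h$-cobordism theorem does the rest.
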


The present work is devoted to proving the theorem.









\section{From properness to diffeomorphism type\label{sec:topologicalpart}}

For fixed $d\geq1$
let $(W,\omega)$ be a symplectically aspherical filling of $(M,\xi)$.
In this section we will summarise intermediate results
about the diffeomorphism type of $W$ obtained in \cite{kz19}.
The notion of fittings is not involved.


\subsection{Characterisation via $h$-cobordism\label{subsec:charviahc}}

\cite[Proposition 2.1]{kz19} yields that
$W$ and $W_{\!\st}$ are diffeomorphic if and only if
$W$ is simply connected and
the singular homology groups satisfy
$H_kW=H_kW_{\!\st}$ for all $k$.
This result is based on the $h$-cobordism theorem
applied to an $h$-cobordism that is constructed
using $\chi(S^{2d+1})=0$.


\subsection{Capping and reduction\label{subsec:capandred}}

\cite[Lemma 3.1]{kz19} reduces the criterion recalled
in Section \ref{subsec:charviahc}
to surjectivity properties of a certain map
in homotopy and homology.
The relevant map is constructed as follows:
Provide $\C^{d+1}$ with coordinates
\[
\bfz=\big(z_0,\ldots,z_d\big)
\,,\quad
\bfz=\bfx+\rmi\bfy
\,,
\]
and denote the standard symplectic form on $\C^{d+1}$
by $\rmd\bfx\wedge\rmd\bfy$.
The Fubini--Study symplectic form on $\C P^d$
is denoted by $\omega_{\fs}$.
The graph of the {\bf anti-Hopf map}
\[
S^{2d+1}\lra\C^{d+1}\times\C P^d\,,
\quad\bfz\longmapsto(\bfz,[\bar{\bfz}])\,,
\]
defines a Lagrangian embedding of
$S^{2d+1}\subset\C^{d+1}$.
Here,
$S^{2d+1}\ra\C P^d$, $\bfz\mapsto [\bfz]$,
is the Hopf map and $\bfz\mapsto\bar{\bfz}$
indicates complex conjugation.

By Weinstein's neighbourhood theorem
the anti-Hopf map extends to a symplectic embedding
of a disc neighbourhood $D_{\delta}T^*S^{2d+1}$,
$\delta>0$, of $S^{2d+1}$ in $T^*S^{2d+1}$.
Therefore, $S_{\delta}T^*S^{2d+1}$ appears
as hypersurface $(M,\xi)$ of contact type in
$\C^{d+1}\times\C P^d$ provided with
$\rmd\bfx\wedge\rmd\bfy+\omega_{\fs}$.
Define a symplectic cap by setting
\[
\cp:= \C^{d+1}\times\C P^d\setminus
\Int\big(D_{\delta}T^*S^{2d+1}\big)
\]
equipped with the symplectic form
\[
\omega_{\cp}:=
\big(
\rmd\bfx\wedge\rmd\bfy+\omega_{\fs}
\big)|_{\cp}
\,.
\]
Gluing along contact type boundaries
yields a symplectic manifold
\[
(Z,\Omega):=
(W,\omega)\cup_{(M,\xi)}
\big(\!\cp,\omega_{\cp}\big)
\,.
\]
Let $a_0\in\C$ be a complex number
with sufficiently large absolut value,
so that $a_0\times\C^d\times\C P^d$
is a complex hypersurface in $Z$.
Now,
by \cite[Lemma 3.1]{kz19},
$W$ and $W_{\!\st}$ are diffeomorphic
provided that the inclusion map
$a_0\times\C^d\times\C P^d\ra Z$
is surjective after applying
$\pi_1$ and $H_k(\,.\,;\F)$ for all $k\in\Z$
and all fields $\F$.


\subsection{Splitting off a factor\label{subsec:splittoffafak}}

On $\C$, provided with coordinates $z_0=x_0+\rmi y_0$,
we consider the symplectic form $a(x_0,y_0)\rmd x_0\wedge\rmd y_0$
for a smooth function $a\co\C\ra(0,\infty)$
that equals $1$ on $B_r^2(0)$ for some $r>1$ and
\[
a(x_0,y_0)=\big(1+x_0^2+y_0^2\big)^{-2}
\]
for $(x_0,y_0)$ in the complement of a compact set in $\C$.
The symplectic form extends to a symplectic form $\sigma$
on $\C P^1$ via the affine chart $[1:z_0]\mapsto z_0$.
In particular,
$\sigma$ equals $\omega_{\fs}$ in a neighbourhood of $\infty=[0:1]$.
Observe that the total area $A$ of $\sigma$ is at least $\pi r^2$.

We split off the $z_0$-factor
$\C\times\C^d \times\C P^d$ of $\C^{d+1}\times\C P^d$.
Coordinates on the $\C^d$-factor are introduced as
$\bfz'=\bfx'+\rmi\bfy'=(z_1,\ldots,z_d)$.
We symplectically compactify partially to
\[
\Big(\C P^1\times\C^d \times\C P^d,
\sigma+\rmd\bfx'\!\wedge\rmd\bfy'+\omega_{\fs}\Big)
\,,
\]
which, as in Section \ref{subsec:capandred},
can be assumed to contain
$(M,\xi)$ as hypersurface of contact type.
A symplectic cap
\[
\widehat{\cp}:=
\C P^1\times\C^d\times\C P^d \setminus
\Int\big(D_{\delta}T^*S^{2d+1}\big)
\]
with symplectic form
\[
\hat{\omega}:=
\sigma+\rmd\bfx'\!\wedge\rmd\bfy'+\omega_{\fs}
\]
can be defined as in Section \ref{subsec:capandred},
as well as the glued symplectic manifold
\[
\big(\hat{Z},\hat{\Omega}\big):=
(W,\omega)\cup_{(M, \xi)}\Big(\widehat{\cp},\hat{\omega}\Big)
\,.
\]


\subsection{Complex cycles\label{subsec:complexcycles}}

The standard complex structure
on $\C P^1\times\C^{d}\times\C P^d$
is denoted by
$J_{\st}=\rmi\oplus\rmi\oplus\rmi$.
Consider the holomorphic spheres
\[
C_1:=\C P^1\times\pt\times\pt\,,
\qquad
C_2:=\pt\times\pt\times\C P^1\,,
\]
in $\widehat{\cp}$,
where $\C P^1\subset \C P^d$ denotes a complex line
in the definition of $C_2$.
Define complex hypersurfaces
\[
H_1:=\infty\times\C^d\times\C P^d\,,
\qquad
H_2:=\C P^1\times\C^d\times\C P^{d-1}\,,
\]
where $\C P^{d-1}\subset\C P^d$
denotes a complex hyperplane.
Observe that $H_1\subset\widehat{\cp}$,
while $H_2$ is just a hypersurface in
$\C P^1\times\C^d \times\C P^d$.
We choose $\C P^1$ and $\C P^{d-1}$
to be in general position in $\C P^d$,
so that the intersection numbers
$C_i\cdot H_j=\delta_{ij}$
can be read off for $i,j=1,2$.


\subsection{Almost complex structure and maximum principle\label{subsec:almcpxstrandmp}}

Let $J$ be a tame almost complex structure
on $(\hat{Z},\hat{\Omega})$
that equals $J_{\st}$ on $\widehat{\cp}$,
cf.\ \cite[Section 3.3]{kz19}.
Consider a holomorphic sphere
$C=u(\C P^1)$ parametrised by $u\co\C P^1\ra(\hat{Z},J)$.

\begin{rem}
\label{rem:maximumprincipleA}
The maximum principle implies
that if $C\subset\widehat{\cp}$, then $u$
is of the form $u=(u^1,\pt,u^3)$, where $\pt\in\C^{d}$ and
$u^1\co\C P^1\ra\C P^1$, $u^3\co\C P^1\ra\C P^d$
are holomorphic, cf.\ \cite[Lemma 3.2(i)]{kz19}.
\end{rem}

\begin{rem}
\label{rem:maximumprincipleB}
Choose $R\in(1,r)$ large enough
such that $M$ appears as a hypersurface in
$B_R^2(0)\times B_R^{2d}(0)\times\C P^d$
and define
\[
W_{\square}:=W\cup
\Big\{(z_0,\bfz',\bfw)\in\widehat{\cp}\;\big|\;|z_0|,|\bfz'|<R\Big\}
\,.
\]
Assume $C$ satisfies
$C\cap\big(\hat{Z}\setminus W_{\square}\big)\neq\emptyset$
and $C\cap H_1=\emptyset$,
then the maximum principle in $\C$- and in $\C^d$-direction
applies as in \cite[Lemma 3.2(ii)]{kz19}.
Namely,
the $\C$- and $\C^d$-projections of $C$ must be
contained in the level sets of the radial distance functions,
so that $C$ in fact is contained in
$\hat{Z}\setminus W_{\square}\subset\widehat{\cp}$.
By Remark \ref{rem:maximumprincipleA} finally,
$u$ is of the form $u=(\pt,\pt,u^3)$,
where $u^3: \C P^1 \ra \C P^d$ is holomorphic.
\end{rem}


\subsection{Moduli space and reduction\label{subsec:modspaceandred}}

Define the {\bf moduli space} $\MM$
to be the set of all holomorphic maps
$u\co\C P^1\ra(\hat{Z},J)$
with $u(\C P^1)$ homologous to $C_1$,
mapping the marked points
$z\in\{\pm\varrho,\infty\}$ for fixed $\varrho\in(R,r)$
into $z\times\C^d\times\C P^d$.
In \cite[Remark 3.3]{kz19} the following was observed:

All $u\in\MM$ satisfy $u\bullet H_1=1$,
hence, are simple,
and have symplectic energy equal to $E(u)=A>\pi r^2$.
In particular,
if $u(\C P^1)\subset\widehat{\cp}$
for $u\in\MM$, then $u=(\id,\pt,\pt)$ by
Remark \ref{rem:maximumprincipleA},
so that $u$ is Fredholm regular.
Consequently,
$\MM$ is a smooth, naturally oriented
$4d$-dimensional manifold
by choosing $J$ on $\Int(W)$ to be generic
such that $u\in\MM$ is Fredholm regular
whenever $u(\C P^1)\cap\Int(W)\neq\emptyset$.

The maximum principle in $\C^d$-direction
as in Remark \ref{rem:maximumprincipleB} above,
shows that
  \[
   \Big\{u\in\MM\;\Big|\;u(\C P^1)\cap
   \big\{(z_0,\bfz',\bfw)\in\widehat{\cp}\;\big|\;|\bfz'|\geq R\big\}
   \neq\emptyset\Big\}
  \]
equals
  \[
   \big\{u=(\id,\bfz',\bfw)\;\big|\;\bfz'\in\C^d,\;|\bfz'|\geq R,\;\bfw\in\C P^d\big\}\,.
  \]
Therefore,
the end of $\MM$ can be identified with
$\big(\C^d\setminus B^{2d}_R(0)\big)\times\C P^d$.
The evaluation map
\[
\widehat{\ev}\co\MM\times\C P^1\lra\hat{Z}
\,,\quad
(u,z)\longmapsto u(z)
\]
restricted to the end equals
$\big((\id,\bfz',\bfw),z\big)\mapsto(z,\bfz',\bfw)$.

Reducing to the criterion in Section \ref{subsec:capandred},
\cite[Lemma 3.4]{kz19} shows:
If $\widehat{\ev}$ is proper,
then $W$ and $W_{\!\st}$ are diffeomorphic.


\subsection{Noded curves and reduction\label{subsec:nodedcandred}}

Define
\[
\hat{Z}_R:=W\cup
\Big\{(z_0,\bfz',\bfw)\in\widehat{\cp}\;\big|\;|\bfz'|\leq R\Big\}
\,.
\]
Uniform energy and $C^0$-bounds
discussed in Section \ref{subsec:modspaceandred}
yield as in \cite[Section 3.4.1]{kz19}
that any sequence $u^{\nu}\in\MM$ with
$u^{\nu}(\C P^1)\subset\hat{Z}_R$ for all $\nu\in\N$
admits a subsequence (again denoted by) $u^{\nu}$
that converges in the sense of Gromov compactness
to a stable holomorphic sphere $\bfu$
with three fixed marked points.
Denoting the non-constant components of $\bfu$
by $u_1,\dots,u_N$, $N\geq1$,
the convergence is in $C^{\infty}$,
meaning that the evaluation map $\widehat{\ev}$ is proper,
if and only if $N=1$.
In other words,
 \emph{if it holds that always $N=1$,
 then $W$ and $W_{\!\st}$ will be diffeomorphic.}

Invoking positivity of intersections
with the complex hypersurface $H_1$
and that $u^{\nu}\bullet H_1=1$ holds for all $\nu\in\N$
as in \cite[Section 3.4.2]{kz19} yields
(after potentially relabeling the bubble spheres that)
$u_1\bullet H_1=1$ and $u_j\bullet H_1=0$
for all $j\geq 2$.
Moreover, with Remark \ref{rem:maximumprincipleB},
we have  for each $j\geq 2$ that
either $u_j(\C P^1)\subset W_{\square}$
or $u_j=(\pt,\pt,u^3_j)$ with
$u^3_j: \C P^1 \ra \C P^d$ holomorphic.
Observe that $u_1(\C P^1)\subset\hat{Z}_R$
and that $u_j(\C P^1)\subset Z_R$ for all $j\geq 2$,
where
\[
Z_R:=W\cup
\Big\{(z_0,\bfz',\bfw)\in\widehat{\cp}\setminus H_1\big|\;|\bfz'|\leq R\Big\}
\,.
\]

Applying the Mayer--Vietoris sequence to the manifolds
$\hat{Z}=W\cup_M\!\widehat{\cp}$ and $Z=W\cup_M\!\cp$,
which are glued together along the simply connected hypersurface $M$,
yields a decomposition of spherical homology classes
\[
H_2\hat{Z}\ni
[u_1]=
S_1+A_1
\in H_2W\oplus H_2\widehat{\cp}
\]
and
\[
H_2Z\ni
[u_j]=
S_j+A_j
\in H_2W\oplus H_2\!\cp
\,,
\]
$j\geq 2$, resp.,
as in \cite[Section 3.4.3]{kz19}.
The respective classes $S_j$ and $A_j$
are sums of spherical classes for all $j=1,\dots,N$.
Observe that for $d\geq2$ the decompositions are unique;
for $d=1$ unique up to adding the fibre class
of $M\cong S^3\times S^2$.
Symplectic asphericity of $(W,\omega)$ yields
\[
\hat{\Omega}(S_j)=0
\quad
\text{and, hence,}
\quad
0<E(u_j)=\hat{\Omega}(A_j)
\]
for all $j=1,\dots,N$.
With the intersection pattern
from Section \ref{subsec:complexcycles}
and positivity of energy we find
as in \cite[Section 3.4.4]{kz19}
that there exists $\ell_1\in\Z$ and
$\ell_2,\ldots,\ell_N\in\N$ such that
\[
A_1=[C_1]+\ell_1[C_2]
\quad
\text{and}
\quad
A_j=\ell_j[C_2]\,,
\]
for all $j\geq 2$.

We will continue the argument at the beginning of
Section \ref{sec:asechypcla}.


\section{A second hyperplane class\label{sec:asechypcla}}

As the complex hypersurface $H_2$
of $\C P^1\times\C^d\times\C P^d$
is not a subset of $\widehat{\cp}$
the intersection pattern form
Section \ref{subsec:complexcycles}
can not be used to exclude bubbling off phenomena
in Section \ref{subsec:nodedcandred}.
In order to substitute $H_2$ we will consider
the complex hypersurface $\hat{H}$ of
$\C P^1\times\C^d\times\C P^d$
given by the zero set of the bihomogeneous polynomial
\[
z_0w_0+z_0'\big(z_1w_1+\ldots+z_dw_d\big)=0
\]
in Section \ref{subsec:projoffirstfak}.
In fact,
$\hat{H}$ is contained in the interior of $\widehat{\cp}$ and
has intersection numbers $C_i\cdot\hat{H}=1$ for $i=1,2$,
see Section \ref{subsec:projoffirstfak}.
Moreover,
in Section \ref{subsec:wwhomotopyequivalent}
we will isotope $W$ inside $\hat{Z}\setminus\hat{H}$
to a subset $W'$ that is symplectically aspherical
w.r.t.\ $\hat{\Omega}$ and
such that there exists an exhausting, strictly plurisubharmonic function
\[
\hat{h}\co\hat{Z}\setminus
\Big(
\hat{H}\cup\Int(W')
\Big)
\lra(0,\infty)
\]
for which $\partial W'$ appears as a level set,
see Section \ref{subsec:exapshf}.

With this preliminaries,
which we will prove in this section below,
we continue the argumentation from
Section \ref{subsec:nodedcandred}:

\begin{proof}[Proof of Theorem \ref{thm:uniquenesstheorem}]
  We take intersections of the bubble spheres $u_j$ with $\hat{H}$.
  Using affine charts for the composition of the map $u_j$
  with the dehomogenised polynomial,
  which defines $\hat{H}$ locally,
  we obtain with unique continuation the following:
  Either intersection points of $u_j(\C P^1)$ with $\hat{H}$ are isolated
  or $u_j(\C P^1)\subset\hat{H}$.
  In the case of isolated intersections
  the local intersection numbers are positive with \cite[p.~63]{gh78}.
  Note that if $u_1(\C P^1)$ is contained in $\widehat{\cp}$
  then $\ell_1\geq1$ by Remark \ref{rem:maximumprincipleA}.
  However,
  as $\hat{H}$ can be compactified via homogenisation
  as in Section \ref{subsec:apsvtypeemb},
  the sum of the local intersection numbers with $\hat{H}$
  is a homological invariant.
  Therefore,
  in either case, we get with the intersection numbers
  from Section \ref{subsec:projoffirstfak} that
  \[
  0\leq u_1\bullet\hat{H}=1+\ell_1
  \quad\text{and}\quad
  1\leq\ell_j=u_j\bullet\hat{H}
  \,,
  \]
  for all $j\geq 2$.
  Of course the positivity of the $\ell_j$ for $j\geq 2$
  was already observed in Section \ref{subsec:nodedcandred}.

  On the other hand $C_1=\sum_{j=1}^N[u_j]$
  and the expansions of the $A_j$
  at the end of Section \ref{subsec:nodedcandred} imply
  \[
  1=
  C_1\cdot\hat{H}=
  \sum_{j=1}^NA_j\cdot\hat{H}=
  (1+\ell_1)+\ell_2+\cdots+\ell_N\geq
  N-1\geq
  0
  \,.
  \]
  It follows that either $N=1$ with $\ell_1=0$
  or that $N=2$ and $\ell_1=-1$, $\ell_2=1$.
  The case $N=1$ means properness of $\widehat{\ev}$.
  In the case that $N=2$ we get
  \[
  A_1=[C_1]-[C_2]
  \quad\text{and}\quad
  A_2=[C_2]
  \,.
  \]
  In other words
  \[
  [u_1]=
  S_1+[C_1]-[C_2]
  \quad\text{and}\quad
  [u_2]=
  -S_1+[C_2]
  \,.
  \]
  The two potentially remaining bubble spheres
  have symplectic energy
  \[
  E(u_1)=A-\pi
  \quad\text{and}\quad
  E(u_2)=\pi
  \,.
  \]

  In order to exclude $N=2$
  observe that $\ell_1=-1$ implies $u_1\bullet\hat{H}=0$.
  As already remarked at the beginning of the proof,
  $u_1(\C P^1)\subset\hat{H}\subset\widehat{\cp}$
  would imply that $u_1^3$ represents $-[C_2]$ holomorphically,
  see Remark \ref{rem:maximumprincipleA}.
  We get that $u_1(\C P^1)$ and $\hat{H}$ are disjoint. 
  Evaluating $\hat{h}$ along the curve $u_1(\C P^1)$
  yields by the maximum principle that either
  $u_1(\C P^1)$ is contained in a level set of $\hat{h}$,
  so that $u_1$ is constant in view of the $-\rmd(\rmd\hat{h}\circ J)$--energy,
  or $u_1(\C P^1)$ is contained in $W'$,
  so that $u_1$ is constant in view of 
  symplectic asphericity in Section \ref{subsec:wwhomotopyequivalent}.
  In either case,
  we obtain a contradiction to the assumption
  that bubble spheres are not constant.
\end{proof}

In order to complete the proof of
Theorem \ref{thm:uniquenesstheorem}
we have to provide the constructions and claims
made at the beginning of this section.
This will occupy the rest of this section:


\subsection{Affine quadric\label{subsec:anaffquad}}

We provide $\C^{d+1}\times\C^{d+1}$
with coordinates
\[
(\bfz,\bfw)=\big(z_0,\ldots,z_d,w_0,\ldots,w_d\big)
\]
and consider the affine quadric
\[
Q:=\big\{\bfz\cdot\bfz+\bfw\cdot\bfw=1\big\}
\,.
\]
A decomposition
\[
\bfz=\bfx+\rmi\bfy
\,,\quad
\bfw=\bfu+\rmi\bfv
\,,
\]
in real and imaginary parts, yields
\[
Q=
\big\{
|\bfx|^2+|\bfu|^2=1+|\bfy|^2+|\bfv|^2
\,,\,\,
\bfx\cdot\bfy+\bfu\cdot\bfv=0
\big\}
\,.
\]
Consider the exhausting, strictly plurisubharmonic function
$f\co Q\ra[0,\infty)$ defined by restricting the map
$(\bfz,\bfw)\mapsto\frac12\big(|\bfy|^2+|\bfv|^2\big)$
to $Q$.
Observe that $f$ vanishes precisely
along the totally real $(2d+1)$-dimensional sphere
$\{|\bfx|^2+|\bfu|^2=1\}$.


\subsection{Transformed affine quadric\label{subsec:transaffquad}}

The linear coordinate change
\[
\Phi(\bfz,\bfw)=\big(\bfz+\rmi\bfw,\bfz-\rmi\bfw\big)
\]
maps the quadric $Q$
introduced in Section \ref{subsec:anaffquad} to
\[
\Phi(Q)=
\big\{
\bfz\cdot\bfw=1
\big\}
\,.
\]
The advantage of the transformed coordinates is that
$\Phi(\bfx,\bfu)=\big(\bfx+\rmi\bfu,\overline{\bfx+\rmi\bfu}\big)$
maps the real part $\{|\bfx|^2+|\bfu|^2=1\}$ of $Q$ onto
the totally real $(2d+1)$-sphere
\[
\Big\{
(\bfz,\bar{\bfz})\in\C^{d+1}\times\C^{d+1}
\;\big|\;
\bfz\in S^{2d+1}
\Big\}
=\{\bfz\cdot\bar{\bfz}=1\}
\,.
\]
Furthermore the composition
of the exhausting, strictly plurisubharmonic function $f$
with the inverse of the coordinate change
$\Phi^{-1}(\bfz,\bfw)=\frac12\big(\bfz+\bfw,\rmi(\bfw-\bfz)\big)$
yields the exhausting, strictly plurisubharmonic function 
$f\circ\Phi^{-1}\co\Phi(Q)\ra[0,\infty)$
given by the restriction of
\[
f\circ\Phi^{-1}\big(\bfx+\rmi\bfy,\bfu+\rmi\bfv\big)=
\frac18\big(|\bfy+\bfv|^2+|\bfx-\bfu|^2\big)
\]
to the image quadric $\Phi(Q)$.


\subsection{Projectisation of the second factor\label{subsec:projofsecfak}}

Denote the natural $\C^*$-action on $\C^{d+1}$
by $(a,\bfw)\mapsto a\bfw$.
The projection $\bfw\mapsto[\bfw]$
to the orbit space is a surjective submersion
$[\,.\,]\co\C^{d+1}\setminus\{0\}\ra\C P^d$.
Define
\[
\Psi:=(\id\oplus[\,.\,])\circ\Phi
\,,
\]
which is a surjective submersive holomorphic map
\[
\Big\{
(\bfz,\bfw)\in\C^{d+1}\times\C^{d+1}
\;\big|\;
\bfz\neq\rmi\bfw
\Big\}
\lra
\C^{d+1}\times\C P^d
\,.
\]
In coordinates, explicitly, we have
\[
\Psi(\bfz,\bfw)=
\big(\bfz+\rmi\bfw,[\bfz-\rmi\bfw]\big)
\,.
\]
As the domain of $\Psi$ contains $Q$,
the restriction
\[
\varphi:=\Psi|_Q\co Q\lra
\big(\C^{d+1}\times\C P^d\big)\setminus H
\]
is a well defined biholomorphism,
where
\[
H:=\Big\{
\big(\bfz,[\bfw]\big)\in\C^{d+1}\times\C P^d
\;\big|\;
\bfz\cdot\bfw=0
\Big\}
\,.
\]
In addition, the image 
\[
\Big\{
(\bfz,[\bar{\bfz}])\in\C^{d+1}\times\C P^d
\;\big|\;
\bfz\in S^{2d+1}
\Big\}
=\{\bfz\cdot\bar{\bfz}=1\}
\]
of the anti-Hopf map introduced in Section \ref{subsec:capandred}
does not intersect $H$.
Furthermore the composite
\[
\hat{f}:=f\circ\varphi^{-1}
\co
\big(\C^{d+1}\times\C P^d\big)\setminus H
\lra
[0,\infty)
\]
is exhausting, strictly plurisubharmonic function.





\subsection{Disc bundle structure\label{subsec:quaddiscbund}}

The total space of $TS^{2d+1}$
can be understood as the set of all
$(\bfq',\bfp')\in\R^{2d+2}\times\R^{2d+2}$
such that $|\bfq'|=1$ and $\bfq'\cdot\bfp'=0$.
Writing
\[
\bfq=\bfx+\bfu
\,,\quad
\bfp=\bfy+\bfv
\,,
\]
it is diffeomorphic to the quadric $Q$ via the inverse of
\[
Q
\lra
TS^{2d+1}
\,,
\quad
\bfq+\rmi\bfp\longmapsto
\Big(\frac{\bfq}{|\bfq|},-|\bfq|\bfp\Big)
\,.
\]
Therefore,
the sublevel sets of
$f(\bfq,\bfp)=\frac12|\bfp|^2$, $(\bfq,\bfp)\in Q$,
and, hence, of $\hat{f}$,
admit the structure of a tubular neighbourhood of
$\varphi\big(\{\bfp=0\}\big)$.



\subsection{Projectisation of the first factor\label{subsec:projoffirstfak}}

We consider the affine embedding
\[
\C^{d+1}\times\C P^d
\lra
\C P^1\times\C^d\times\C P^d
\]
given by
\[
\big(\bfz,[\bfw]\big)
\longmapsto
\Big([1:z_0],(z_1,\ldots,z_d),[\bfw]\Big)
\]
using coordinates $[z_0':z_0]$ on the $\C P^1$-factor.
Define a complex hypersurface
\[
\hat{H}\subset
\C P^1\times\C^d\times\C P^d
\]
by
\[
z_0w_0+z_0'\big(z_1w_1+\ldots+z_dw_d\big)=0
\,,
\]
which is the homogenisation of $\bfz\cdot\bfw=0$
corresponding to $[z_0':z_0]$.
Observe that the affine part equals
$H=\hat{H}\cap\{z_0'=1\}$.
On $\{z_0=1\}$ we read off the equation
$w_0+z_0'\big(z_1w_1+\ldots+z_dw_d\big)=0$.
Therefore,
\[
\hat{H}=H\cup\Big([0:1]\times\C^d\times\{w_0=0\}\Big)
\,,
\]
so that
\[
\big(\C P^1\times\C^d\times\C P^d\big)\setminus\hat{H}
\]
is equal to the union of
\[
\varphi(Q)=\big(\C^{d+1}\times\C P^d\big)\setminus H
\quad\text{and}\quad
\Big([0:1]\times\C^d\times\{w_0\neq0\}\Big)
\,.
\]
Precomposing the embedding obtained
with the biholomorphism $\varphi$
from Section \ref{subsec:projofsecfak}
yields a holomorphic embedding
\[
\hat{\varphi}\co
Q\lra
\big(\C P^1\times\C^d\times\C P^d\big)\setminus\hat{H}
\]
of the affine quadric $Q$.

In the intersection pattern in
Section \ref{subsec:complexcycles}
we substitute $H_2$ by $\hat{H}$.
We claim that
\[
C_i\cdot\hat{H}=1
\,,\quad
i=1,2
\,,
\]
for the intersection numbers.
Indeed, representing $C_1$ by
\[
\C P^1\ni[z_0':z_0]
\longmapsto
\Big([z_0':z_0],(0,\ldots,0),[1:0:\ldots:0]\Big)
\]
the intersection with $\hat{H}$ 
is determined by $z_0=0$ on $\{z_0'=1\}$;
on $\{z_0=1\}$ there is no intersection.
The class $C_2$ can be represented by
\[
\C P^1\ni[z:w]
\longmapsto
\Big([1:0],(0,\ldots,1),[0:\ldots:0:z:w]\Big)
\]
such that the intersection with $\hat{H}$
given by $w=0$.


\subsection{A Pl\"ucker--Segre--Veronese type embedding\label{subsec:apsvtypeemb}}

Introducing coordinates
\[
\Big([z_0':z_0],[z_1':z_1:\ldots:z_d],[\bfw]\Big)
\in
\C P^1\times\C P^d\times\C P^d
\]
we homogenise $\bfz\cdot\bfw=0$ a third time
and obtain as in Section \ref{subsec:projoffirstfak}
\[
z_0z_1'w_0+z_0'\big(z_1w_1+\ldots+z_dw_d\big)=0
\,.
\]
Writing this as
\[
z_{0b0}+z_{a11}+\ldots+z_{add}=0
\]
we get a linear equation in $d+1$ unknowns.
Organising this with help of homogeneous coordinates $z_{ijk}$
for $i,k=0,\ldots,d$ and $j=a,b$
we end up with an holomorphic embedding
\[
\C P^1\times\C^d\times\C P^d
\lra
\C P^1\times\C P^d\times\C P^d
\lra
\C P^N
\,,\quad
N=2(d+1)^2-1
\,,
\]
that maps $\hat{H}$ into a hyperplane of $\C P^N$.
After a linear coordinate change we assume that the hyperplane
containing the image of $\hat{H}$ is simply $\C P^{N-1}$.
This embedding restricts to a holomorphic embedding
\[
\psi\co
\big(\C P^1\times\C^d\times\C P^d\big)\setminus\hat{H}
\lra\C^N
\,,
\]
so that $\im\psi$ is Stein. 
The pull back of $\frac12|\,.\,|^2$
along $\psi$ defines an exhausting,
strictly plurisubharmonic function $\hat{g}$
on the complement of
$\hat{H}$ in $\C P^1\times\C^d\times\C P^d$.


\subsection{Extending a plurisubharmonic function\label{subsec:exapshf}}

We want to modify the exhausting,
strictly plurisubharmonic function
$f\co Q\ra[0,\infty)$ from Section \ref{subsec:anaffquad}.
For that we define a second exhausting,
strictly plurisubharmonic function
$g\co Q\ra[0,\infty)$ by $g:=\hat{\varphi}^*\hat{g}$,
see Sections \ref{subsec:projoffirstfak}
and \ref{subsec:apsvtypeemb}.

Choose positive real numbers $g_0<g_1<g_2$
such that $\{f=0\}$ is contained
in the relatively compact sublevel set $\{g<g_0\}$.
Denote by $\chi\co Q\ra[0,1]$ a smooth cut--off function
that is equal to $1$ on $\{g\leq g_1\}$ and
$0$ on $\{g\geq g_2\}$.
Let $\kappa\co\R\ra[0,\infty)$ be a smooth function
that is $0$ on $\{g\leq g_0\}$,
satisfies $\kappa'>0$, $\kappa''\geq0$ on $\{g>g_0\}$,
and is of the form $Ag+B$ on $\{g\geq g_2\}$
for positive constants $A,B\in\R$.
As in \cite{dvl91},
the function $h\co Q\ra[0,\infty)$ defined by
\[
h:=\chi f+C\kappa(g)
\]
is an exhausting, strictly plurisubharmonic function
for $C>0$ sufficiently large.
Indeed, $h=f$ on $\{g\leq g_0\}$.
By \cite[p.~19, Remark 2.8]{ce12},
the composition $\kappa(g)$ is strictly plurisubharmonic on $\{g>g_0\}$,
so that $h$ is strictly plurisubharmonic on $\{g\leq g_1\}$.
On the compact set $\{g_1\leq g\leq g_2\}$
we choose $C$ large enough
such that the Levi form of $C\kappa(g)$ dominates
the one of $\chi f$.
On $\{g\geq g_2\}$ we finally have
\[
h=C(Ag+B)
\,.
\]

Consequently,
$h\circ\varphi^{-1}\co\big(\C^{d+1}\times\C P^d\big)\setminus H\ra[0,\infty)$
extends to an exhausting, strictly plurisubharmonic function
\[
\hat{h}\co
\big(\C P^1\times\C^d\times\C P^d\big)\setminus\hat{H}
\lra[0,\infty)
\,,
\]
which equals $C(A\hat{g}+B)$ in a neighbourhood of
the hypersurface $\hat{H}$.
After choosing the constant $g_0$ appropriately
we can assume that $\hat{h}$ restricts to $\hat{f}$
from Section \ref{subsec:projofsecfak}
on the tubular neighbourhood $D_{\delta}T^*S^{2d+1}$ of
\[
\Big\{
(\bfz,[\bar{\bfz}])\in\C^{d+1}\times\C P^d
\;\big|\;
\bfz\in S^{2d+1}
\Big\}
\subset
\C P^1\times\C^d\times\C P^d
\]
chosen in Section \ref{subsec:capandred}.


\subsection{Deforming the filling\label{subsec:wwhomotopyequivalent}}

We continue the discussion from Section \ref{subsec:exapshf}.
Denote by $M'$ a level set of $\hat{f}$
that is contained in $D_{\delta}T^*S^{2d+1}$.
In view of Section \ref{subsec:quaddiscbund},
we find by uniqueness of tubular neighbourhoods
up to isotopy an isotopy
of $M'$ and $S_{\delta}T^*S^{2d+1}$
inside $D_{\delta}T^*S^{2d+1}$.
The trace of the isotopy does not intersect
the zero section $S^{2d+1}$;
and so does not $D_{\varepsilon}T^*S^{2d+1}$
for $\varepsilon\in(0,\delta)$ sufficiently small.
  
By a priori usage of $D_{\varepsilon}T^*S^{2d+1}$
(replacing $\delta$ by $\varepsilon$)
in the gluing construction in Section \ref{subsec:capandred}
we can assume that $M'$ and $S_{\varepsilon}T^*S^{2d+1}$
are isotopic via an isotopy with trace
not intersecting $\Int\big(D_{\varepsilon}T^*S^{2d+1}\big)$.
With isotopy extension we see
that the union $W'$ of $W$
and the cobordism bounded by
$S_{\varepsilon}T^*S^{2d+1}$ and $M'$ inside $\hat{Z}$
deformation retracts onto $W$.
In particular,
$W'$ as submanifold of $\big(\hat{Z},\hat{\Omega}\big)$
is symplectically aspherical.
\hfill Q.E.D.


\begin{ack}
  We thank Hansj\"org Geiges for ongoing discussions
  on the diffeomorphism type of symplectic fillings.
  We thank Myeonggi Kwon and Takahiro Oba for 
  their comments on the manuscript
  and Stefan Nemirovski for pointing out the source \cite{dvl91}
  to us.
  We thank the participants of the A5/C5-Seminar
  Florian Buck, Jan Eyll,
  Jonas Fritsch and Lars Kelling.
\end{ack}


\end{document}